\newcommand{\p}{\partial}
\newcommand{\Z}{\mathbb Z}
\renewcommand{\H}{\mathcal H}
\newcommand{\D}{\mathcal D}
\newcommand{\Sm}{\mathcal S}
\newcommand{\C}{\mathbb C}
\newcommand{\R}{\mathbb R}
\renewcommand{\phi}{\varphi}
\renewcommand{\Re}{\operatorname{Re}}
\renewcommand{\Im}{\operatorname{Im}}
\newcommand{\spinc}{\ifmmode{\operatorname{spin}^c}\else{$\operatorname{spin}^c$\ }\fi}
\newcommand{\zp}{Z_{\infty}}
\newtheorem{theorem}{Theorem}[section]
\newtheorem{lemma}[theorem]{Lemma}
\newtheorem{proposition}[theorem]{Proposition}
\newtheorem{corollary}[theorem]{Corollary}
\theoremstyle{definition}
\newtheorem{remark}[theorem]{Remark}
\title{On the spectral sets of Inoue surfaces}
\author[Daniel Ruberman]{Daniel Ruberman}
\address{Department of Mathematics, MS 050\newline\indent Brandeis
University \newline\indent Waltham, MA 02454}
\email{\rm{ruberman@brandeis.edu}}
\author[Nikolai Saveliev]{Nikolai Saveliev}
\address{Department of Mathematics\newline\indent
University of Miami, PO Box 249085
\newline\indent Coral Gables, FL 33124}
\email{\rm{saveliev@math.miami.edu}}
\thanks{The first author was partially supported by NSF Grants DMS-1811111 and DMS-1952790, and the second author was partially supported by NSF Grant DMS-1952762}
\subjclass[2020]{32J15 53C55 57R57 58J50}
\begin{document}
\begin{abstract}
We study the Inoue surfaces $S_M$ with the Tricerri metric and the canonical spin$^c$ structure, and the corresponding chiral Dirac operators twisted by a flat $\mathbb C^*$--connection. The twisting connection is determined by $z \in \mathbb C^*$, and the points for which the twisted Dirac operators $\mathcal D^{\pm}_z$ are not invertible are called spectral points. We show that there are no spectral points inside the annulus $\alpha^{-1/4} < |z| < \alpha^{1/4}$, where $\alpha >1$ is the only real eigenvalue of the matrix $M$ that determines $S_M$, and find the spectral points on its boundary. Via Taubes' theory of end-periodic operators, this implies that the corresponding Dirac operators are Fredholm on any end-periodic manifold whose end is modeled on $S_M$.
\end{abstract}

\maketitle
\section{Introduction}
Inoue surfaces are compact complex surfaces with zero second Betti number which are most remarkable in that they contain no holomorphic curves. These surfaces, constructed by Inoue \cite{inoue}, belong to the class $\rm{VII}_{\,0}$ in Kodaira's classification~\cite{barth-hulek-peters-vandeven}, which is to say that they are minimal connected compact complex surfaces $X$ with Kodaira dimension $\kappa(X) = -\infty$ and the first Betti number $b_1 (X) = 1$. In fact, any class $\rm{VII}_{\,0}$ surface with vanishing second Betti number and no holomorphic curves is biholomorphic to an Inoue surface; see Bogomolov \cite{Bogomolov1, Bogomolov2} and Teleman \cite{Teleman}. Inoue surfaces, which are not K{\"a}hler because their first Betti number is odd, have been extensively studied from the viewpoints of both algebraic and differential geometry. 

In this paper, we restrict ourselves to the Inoue surfaces $X$ of class $S_M$ associated with certain integral matrices $M \in SL(3,\Z)$ with one real eigenvalue $\alpha > 1$ and two complex eigenvalues $\beta \ne \bar\beta$. These surfaces, described in detail in Section \ref{S:def}, are known to be diffeomorphic to the mapping torus of a self-diffeomorphism of the 3-torus induced by $M$. It is in this incarnation that the surfaces $S_M$ are best known to topologists. In particular, Cappell and Shaneson \cite{cappell-shaneson,cappell-shaneson:knots} independently used some of the matrices $M$ to construct a fake $\R{\rm P}^4$ and interesting fibered $2$-spheres in a homotopy $4$-sphere.  From this point of view, the manifolds $S_M$ are given by surgery on this homotopy 4-sphere along those knots. The question of when this homotopy 4-sphere is in fact diffeomorphic to $S^4$ has received considerable attention~\cite{akbulut-kirby:cs1,aitchison-rubinstein:involutions,akbulut-kirby:cs2,gompf:sphere}.

Inoue surfaces are an intriguing class of examples to which to apply our work on the Seiberg--Witten invariants \cite{MRS1} and the end-periodic index theorem \cite{MRS2}. Spectral properties of chiral Dirac operators $\D^{\pm}(X)$ play an important role in determining the index of associated Dirac operators on end-periodic manifolds whose end is modeled on an infinite cyclic cover of $X$. In applications of those papers to date~\cite{lin-ruberman-saveliev:double,lin-ruberman-saveliev:splitting,lin-ruberman-saveliev:finite} the infinite cyclic cover was a Riemannian product of the real line and a $3$-manifold. 
 In the case of an Inoue surface, while this cover is topologically the product of the real line and a $3$-torus, it is not a metric product. (This is related to the fact that the monodromy of the bundle $X \to S^1$ has infinite order.)  Since the end-periodic index is metric dependent, this makes for an index problem that must be investigated analytically. We study this problem for the Tricerri metric on $X$, which makes it into a locally conformal K{\"a}hler manifold, and the canonical $\spinc$ structure; see Section \ref{S:def}. 

More specifically, we are interested in the spectral sets of the associated chiral Dirac operators $\D^{\pm}(X)$. Recall from \cite{MRS1} that $z \in \mathbb C^*$ is a  spectral point of $\D^{\pm}(X)$ if and only if the operator 
\[
z^{f}\, \circ\, \D^{\pm} (X)\, \circ\, z^{-f} = \D^{\pm} (X) - \ln z\cdot df
\]
has non-zero kernel, where $f: X \to S^1$ is a smooth function realizing a generator of $H^1(X; \Z) = \Z$, and $df$ operates by Clifford multiplication. One can easily check that the spectral sets of $\D^+(X)$ and $\D^-(X)$ are obtained from each other by inversion $\tau(z) = 1/\bar z$ with respect to the unit circle. The following theorem, which was announced in~\cite[Section 6.4]{MRS2}, is the main result of this paper. 

\begin{theorem}\label{T:unit}
The operators $\D^{\pm} (X)$ have no spectral points in the annulus $\alpha^{-1/4} < |z| < \alpha^{1/4}$. Furthermore, the only spectral points of $\D^+ (X)$ on the circles $|z| = \alpha^{-1/4}$ and $|z| = \alpha^{1/4}$ are, respectively, $z= \alpha^{1/4}\beta$ and $z = \alpha^{1/4}$. 
\end{theorem}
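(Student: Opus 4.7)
The plan is to translate the spectral problem for $\D^{\pm}(X) - \ln z \cdot df$ into a separation-of-variables question on the universal cover $\widetilde X = \mathbb H \times \C$ of $X = S_M$, and to resolve it by Fourier analysis along the $\Z^3$ translation subgroup of $\pi_1 (X)$.

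First, I would identify the chiral Dirac operators with Dolbeault-type operators. For the canonical $\spinc$ structure on the Hermitian surface $(X, g_{\mathrm{Tri}})$, the spinor bundles are $W^+ = \Lambda^{0,0} \oplus \Lambda^{0,2}$ and $W^- = \Lambda^{0,1}$, and $\D^+$ equals $\sqrt{2}\,(\bar\partial + \bar\partial^*)$ up to a zeroth-order term involving the Lee form $\theta$ of the locally conformally K\"ahler metric $g_{\mathrm{Tri}}$. A nonzero section in $\ker(\D^+ - \ln z \cdot df)$ then corresponds, on $\widetilde X$, to a $(0,0) \oplus (0,2)$ form $\phi$ satisfying the corrected Dolbeault equation, invariant under the translation subgroup $\Gamma_0 \cong \Z^3$ of $\pi_1(X)$, and transforming with weight $z$ under the scaling generator $g_0 \colon (w,\zeta) \mapsto (\alpha w, \beta\zeta)$.

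Second, I would Fourier-expand $\phi$ along $\Gamma_0$. The lattice $\Gamma_0 \subset \R \oplus \C$ is spanned by the real and complex eigenvectors of $M$, and $g_0$ permutes the dual lattice $\Gamma_0^*$ via the action $M^{-T}$, whose eigenvalues are $\alpha^{-1}, \beta^{-1}, \bar\beta^{-1}$. Since $\alpha |\beta|^2 = 1$ forces $|\beta| = \alpha^{-1/2}$, no eigenvalue of $M^{-T}$ lies on the unit circle and every nonzero orbit of $\Gamma_0^*$ under $M^{-T}$ is bi-infinite.

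Third, on each Fourier mode $\lambda \in \Gamma_0^*$ the corrected Dolbeault equation reduces to an explicit linear ODE in the hyperbolic variable $v = \Im w$; its solution space is two-dimensional and admits distinct growth rates as $v \to 0$ and as $v \to \infty$. The $g_0$-equivariance couples the radial profiles along each $M^{-T}$-orbit by a recursion of the form
\[
\phi_{M^{-T}\lambda}(v) \;=\; z \cdot c(\lambda) \cdot \phi_\lambda(\alpha v),
\]
where $c(\lambda)$ is an explicit factor arising from the Jacobian of $g_0$ and the conformal weight of the Dolbeault bundle.

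Fourth, smoothness of the global section on the compact surface $X$ forces the orbit-indexed sequence of radial profiles to have controlled growth as $n \to \pm\infty$: one must select the decaying branch on each end of the orbit. Tracking these asymptotics through the recursion reduces the existence of a spectral point to a numerical constraint on $|z|$ that excludes the open annulus $\alpha^{-1/4} < |z| < \alpha^{1/4}$. On the two boundary circles, solutions can appear only along the special orbits containing modes aligned with the real or complex eigenvector of $M$, and a direct identification of these modes pins down the two spectral points $z = \alpha^{1/4}$ and $z = \alpha^{1/4}\beta$ of $\D^+(X)$; the corresponding statement for $\D^-(X)$ then follows from the inversion symmetry $\tau(z) = 1/\bar z$ noted in the paper.

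The main obstacle will be the sharp bookkeeping of the orbit recursion and its asymptotic matching. The individual ODEs are explicit, but the shift by the Lee-form correction and by the scaling Jacobian introduces nontrivial exponents in $v$ that propagate along each $M^{-T}$-orbit; it is precisely the balance between these exponents, the weight $z$, and the eigenvalues $\alpha, \beta$ of $M$ that produces the fourth-root exponents $\alpha^{\pm 1/4}$ in the statement and that must be computed carefully to rule out all interior spectral points and to identify the exceptional boundary values.
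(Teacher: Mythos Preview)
Your overall architecture matches the paper's: pass to the Dolbeault model via Gauduchon's formula (the Lee-form shift is exactly the $\tfrac14\ln\alpha\cdot df$ term), Fourier-expand along the $\Z^3$ translations, let $g_0$ permute the modes by the $M$-action, and analyze the resulting ODEs in the hyperbolic direction. So the strategy is sound.

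There is, however, a genuine error in your identification of the boundary spectral points. You assert that on the circles $|z|=\alpha^{\pm 1/4}$ solutions arise ``only along the special orbits containing modes aligned with the real or complex eigenvector of $M$.'' No such nonzero modes exist: a dual-lattice vector aligned with the real eigendirection would force the $\alpha$-eigenvector to be rational, contradicting irrationality of $\alpha$ (this is the content of the paper's Lemma on $Q_{k\ell m}\neq 0$). In the paper, the two boundary spectral points come instead from the \emph{zero} Fourier mode $(k,\ell,m)=(0,0,0)$, where the ODE degenerates to constants and the $g_0$-equivariance reads $\bar\beta\,b=e^{-\mu}b$, $c=e^{-\mu}c$; this yields $z=1$ and $z=\alpha\beta$ for $\bar\partial\oplus\bar\partial^*$, hence $z=\alpha^{1/4}$ and $z=\alpha^{1/4}\beta$ for $\D^+(X)$ after the Lee-form shift. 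Your proposed mechanism for the boundary points is therefore wrong, and without the zero-mode computation you would not recover the stated values.

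A second gap is that you do not actually execute the infinite-orbit analysis. The paper does not argue by ``selecting the decaying branch on each end'' of a bi-infinite orbit; rather, it uses the recursion to collapse the entire orbit to a \emph{single} pair $(u,v)$ on $\R$, with the $L^2_1$ condition on $X$ translating into a weighted condition $u,v\in L^2_{1,\delta-1/4}(\R)$ where $\delta=\Re\mu/\ln\alpha-1/4$. The vanishing of these solutions for $-1/4\le\delta\le 1/4$ (which covers the closed annulus, not just the open one) is then proved by decoupling into second-order equations with potentials $Pe^t(Pe^t\mp 1)+|Q|^2e^{-t}$ and a Sturm--Liouville positivity/integration-by-parts argument, with the borderline case $P=0$ handled by explicit Bessel functions. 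Your sketch of ``tracking asymptotics through the recursion'' does not yet contain this mechanism, and it is precisely here that the exponents $\alpha^{\pm 1/4}$ emerge.
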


Let $\zp$ be a $\spinc$ end-periodic manifold whose end is modeled on the infinite cyclic cover of an Inoue surface $X$. According to Taubes \cite[Lemma 4.3]{T}, the Dirac operators $\D^{\pm}(\zp)$ are Fredholm in the usual Sobolev $L^2$ completion if and only if their spectral sets are disjoint from the unit circle $|z| = 1$. 

\begin{corollary}\label{C:unit}
The operators $\D^{\pm}(\zp): L^2_1 (\zp) \to L^2 (\zp)$ are Fredholm on any end-periodic $\spinc$ manifold $\zp$ whose end is modeled on an Inoue surface $X$ of type $S_M$. 
\end{corollary}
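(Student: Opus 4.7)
The plan is to deduce the corollary directly from Theorem~\ref{T:unit} by invoking the Fredholm criterion of Taubes cited just before the corollary. By \cite[Lemma 4.3]{T}, the Dirac operator $\D^{\pm}(\zp) : L^2_1(\zp) \to L^2(\zp)$ is Fredholm precisely when the spectral set of $\D^{\pm}(X)$ is disjoint from the unit circle $\{|z| = 1\}$ in $\C^*$. Thus the corollary reduces to a purely spectral statement about the end cross-section $X$, which is exactly what Theorem~\ref{T:unit} provides.

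The key observation is that because $\alpha > 1$ is a real eigenvalue strictly larger than $1$, one has
\[
\alpha^{-1/4} < 1 < \alpha^{1/4},
\]
so the unit circle $|z|=1$ is contained in the open annulus $\alpha^{-1/4} < |z| < \alpha^{1/4}$. Theorem~\ref{T:unit} asserts that there are no spectral points of $\D^{\pm}(X)$ in this open annulus, and consequently no spectral points on the unit circle. Note that the additional information about spectral points on the boundary circles $|z| = \alpha^{\pm 1/4}$ is irrelevant here, since those circles are strictly separated from $|z|=1$.

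Combining these two ingredients, Taubes' criterion applies to both $\D^+(\zp)$ and $\D^-(\zp)$ and yields Fredholmness in the standard $L^2$ Sobolev completion. There is essentially no analytic obstacle to overcome beyond Theorem~\ref{T:unit} itself; one only needs to observe the elementary inequality $\alpha^{-1/4} < 1 < \alpha^{1/4}$ and verify that the hypotheses of \cite[Lemma 4.3]{T} are met for the Tricerri metric and the canonical $\spinc$ structure, which is the setting of Theorem~\ref{T:unit}.
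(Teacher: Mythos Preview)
Your proof is correct and follows exactly the approach intended in the paper: the corollary is an immediate consequence of Theorem~\ref{T:unit} together with Taubes' criterion \cite[Lemma~4.3]{T}, using only the elementary observation that $\alpha>1$ implies $\alpha^{-1/4}<1<\alpha^{1/4}$, so the unit circle lies inside the spectral-free annulus.
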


\begin{remark} Inoue surfaces do not admit metrics of positive scalar metric, as was proved by Albanese \cite[Theorem 4.5]{A}. This also follows from Cecchini and Schick \cite{CS}, making use of the fact that Inoue surfaces are solvmanifolds (see Wall~\cite{wall:geometries,wall:geometric} and Hasegawa~\cite{H}) and hence are enlargeable in the sense of Gromov and Lawson \cite{GL}. In particular, one cannot prove that the operators $\D^{\pm}(\zp)$ of Corollary \ref{C:unit} are Fredholm by using the (uniformly) positive scalar curvature at infinity condition as in~\cite{GL}. 
\end{remark}

Once we establish that the operators $\D^{\pm} (\zp)$ are Fredholm, their index can in principle be calculated as in \cite{MRS2} in terms of an integral term and the periodic eta-invariant $\eta (X)$. The latter is a spectral invariant which generalizes the eta-invariant of Atiyah, Patodi, and Singer \cite{APS} and which can be viewed as a regularized count of points in the spectral set of $\D^{\pm}(X)$. The partial information about the spectral set we obtain in this paper is not sufficient to calculate $\eta(X)$ or the associated index of $\D^{\pm} (\zp)$. However, even this modest attempt leads to some fascinating analysis which we felt was worth sharing.

It is worth mentioning that our original interest in end-periodic index theory grew out of our work \cite{MRS1} with Mrowka on Seiberg--Witten theory for $4$-manifolds $X$ with $b_2 (X) = 0$ and $b_1 (X) = 1$. In that paper, a Seiberg--Witten invariant $\lambda_{\rm{SW}}(X)$ was defined as a sum of two metric dependent terms. One is a count of solutions to the Seiberg--Witten equations, and the other is an index-theoretic correction term, whose most important part is the index of the Dirac operator $\D^+(\zp)$. 

Evaluating $\lambda_{\rm{SW}}(X)$ for an Inoue surface $X$ presents quite a challenge. One can actually solve a modified version of the Seiberg--Witten equations for the Tricerri metric -- see~\cite{okonek-teleman:b+0,lupascu:sw-hermitian}. However, the modification involves a certain twisting of the Dirac operator used in the formulation of the Seiberg--Witten equations. In order to turn this into a calculation of $\lambda_{\rm{SW}}(X)$, one would have to first relate this modified Seiberg--Witten equation to the one used in~\cite{MRS2}. The second step would be to evaluate the correction term; this is essentially the same as finding the invariant $\eta(X)$. As mentioned above, we are quite far from achieving this.    

In conclusion, we mention that a recent paper of Holt and Zhang~\cite{holt-zhang:kt} uses related techniques to investigate $\bar\partial$--harmonic forms on a different non-K\"{a}hler complex manifold, the Kodaira--Thurston surface~\cite{kodaira:I,thurston:symplectic}.

%%%%%%%%%%%%%%%%%%%%%%%%%%%%%%%%%%%%%%%%%%%%%%%%%%%

\medskip\noindent
\textbf{Acknowledgments:}\; 
We thank Tom Mrowka, Leonid Parnovski, and Andrei Teleman for generously sharing their expertise, and the anonymous referee for useful comments.

%%%%%%%%%%%%%%%%%%%%%%%%%%%%%%%%%%%%%%%%%%%%%%%%%%%

\section{Inoue surfaces}\label{S:def}
The Inoue surfaces $X$ we are interested in are all compact quotients of $\H \times \C$, where $\H = \{\,w = w_1 + i w_2\in \C\;|\;w_2 > 0\,\}$ is the upper complex half-plane. To construct $X$, start with an integral matrix $M \in SL(3,\Z)$ with one real eigenvalue $\alpha > 1$ (which must therefore be irrational) and two complex conjugate eigenvalues $\beta \neq \bar\beta$. For example, the matrices
\[
A_m = 
\begin{pmatrix}
\; 0 & 1 & 0 \\
\; 0 & 1 & 1 \\
\; 1 & 0 & m + 1 
\end{pmatrix}
,
\]

\smallskip\noindent
which are equivalent to the Cappell and Shaneson \cite{cappell-shaneson:knots} family, will do as long as $-2 \le m \le 3$. Let $a = (a_1,a_2,a_3)$ be a real eigenvector corresponding to $\alpha$, and $b = (b_1,b_2,b_3)$ a complex eigenvector corresponding to $\beta$. Let $G_M$ be the group of complex analytic transformations of $\H \times \C$ generated by
\begin{gather}
g_0 (w,z) = (\alpha w, \beta z), \notag \\
g_i (w,z) = (w + a_i,z + b_i),\quad i = 1, 2, 3. \notag
\end{gather}

\smallskip\noindent
The group $G_M$ acts on $\H \times \C$ freely and properly discontinuously so that the quotient $X = (\H \times \C)/G_M$ is a compact complex surface. 

Inoue \cite{inoue} showed that as a smooth manifold $X$ is a 3-torus bundle over a circle whose monodromy is given by the matrix $M$, and that $b_1 (X) = 1$ and $b_2 (X) = 0$. One can check, for example, that $H_*(X) = H_*(S^1 \times S^3)$ for all manifolds $X$ obtained from the Cappell--Shaneson matrices $A_m$. Define a function $f: \H \times \C \to \R$ by the formula $f(w,z) = \ln w_2/\ln \alpha$. One can easily see that $df$ is a well defined 1-form on $X$, whose cohomology class generates $H^1 (X; \Z) = \Z$. 

The complex surface $X$ admits no global K{\"a}hler metric. We will however consider the following Hermitian metric on $\H \times \C$, called the Tricerri metric,
\[
g\; =\; \frac {dw\otimes d\bar w}{w_2^2}\; +\; w_2\;dz\otimes d\bar z,
\]
see \cite{Tricerri, DO}. Let $\omega$ be the K{\"a}hler form associated with this metric then $d\omega = d\ln w_2 \wedge \omega$, with the torsion form $d\ln w_2 = \ln \alpha\cdot df$. The metric $g$ is $G_M$--invariant, hence it defines a metric on $X$ which makes $X$ into a locally conformal K{\"a}hler manifold (or l.c.K. manifold, for short). 

The complex surface $X$ admits a canonical $\spinc$ structure with respect to which 
\[
\Sm^+ = \Lambda^{0,0} (X) \,\oplus\,\Lambda^{0,2}(X)\quad\text{and}\quad \Sm^- = \Lambda^{0,1} (X).
\] 
Let $\D^{\pm}(X)$ be the chiral Dirac operators associated with the Tricerri metric and the canonical $\spinc$ structure on $X$. These are the operators that Theorem \ref{T:unit} is concerned with. The proof of Theorem \ref{T:unit} will take up the rest of these notes. 

%%%%%%%%%%%%%%%%%%%%%%%%%%%%%%%%%%%%%%%%

\section{Reduction to the Dirac--Dolbeault operator}
Let $\D^-(X)$ be the negative chiral Dirac operator associated with the Tricerri metric and the canonical $\spinc$ structure on $X$. According to Gauduchon \cite[page 283]{G}, there is an isomorphism 
\begin{equation}\label{E:gaud}
\D^- (X) + \frac 1 4\, \ln\alpha\cdot df\;=\; \sqrt{\,2}\,(\bar\p\,\oplus\,\bar\p^*),
\end{equation}
where
\begin{equation}\label{E:dd}
\bar\p\,\oplus\,\bar\p^*: \;\Omega^{0,1} (X) \longrightarrow \Omega^{0,2}(X)\,\oplus\,\Omega^{0,0}(X)
\end{equation}
is the Dirac--Dolbeault operator on the complex surface $X$. To prove Theorem \ref{T:unit}, it will suffice to compute the spectral set of \eqref{E:dd}. The spectral set of $\D^-(X)$ will be obtained from it via multiplication by $\alpha^{-1/4}$, and the spectral set of $\D^+(X)$ by further inversion.

%%%%%%%%%%%%%%%%%%%%%%%%%%%%%%%%%%%%%%%%

\section{The periodic boundary value problem}  To compute the spectral set of \eqref{E:dd}, we will complete the operator \eqref{E:dd} to an operator $L^2_1 \to L^2$ and look for $z = e^{\mu}\in \C^*$ such that the kernel of the operator
\[
e^{\mu f} \circ ( \bar\p\,\oplus\,\bar\p^*) \circ e^{-\mu f}\; =\; ( \bar\p\,\oplus\,\bar\p^*) - \mu\cdot df
\]
on $X$ is non-zero. Equivalently, after passing to the universal covering space $\H \times \C \to X$, we will look for $\mu$ such that the following periodic boundary problem on $\H \times \C$ has a non-zero solution $\omega \in \Omega^{0,1} (\H \times \C)$:
\begin{gather}
(\bar\p\,\oplus\,\bar\p^*)(\omega) = 0,\quad\text{where}\notag \\
g_i^*\,\omega = \omega\;\;\text{for}\;\; i = 1, 2, 3,\quad\text{and}\quad
g_0^*\,\omega = e^{-\mu}\cdot\omega. \notag
\end{gather}

\noindent
Let us re-state this periodic boundary problem by writing $\omega = a\,d\bar w + b\,d\bar z$ on $\H\times \C$. The equation $(\bar\p\,\oplus\,\bar\p^*)(\omega) = 0$ turns into the system

\[
\begin{cases}
\;\;\dfrac{\p a}{\p\bar z}\; -\; \dfrac {\p b}{\p\bar w} = 0 \\
\vspace{-4mm} \\
\;\;\dfrac{\p (w_2\,a)}{\p w}\;+\;\dfrac 1 {w_2^2}\cdot\dfrac {\p b}{\p z} = 0
\end{cases}
\]

\medskip\noindent
and, after introducing the new function $c = w_2\,a$ and the new variable $t = \ln w_2$, into the system
\begin{equation}\label{E:one}
\left(\frac {\p}{\p t} + i\,B_t\right)\,\begin{pmatrix} b \\ c \end{pmatrix} 
= 0
\end{equation}
with
\[
B_t = \begin{pmatrix}
-e^t\,\dfrac {\p}{\p w_1} & 2\,\dfrac{\p}{\p\bar z} \\
\vspace{-4mm} \\
2e^{-t}\,\dfrac{\p}{\p z} & e^t\,\dfrac{\p}{\p w_1}
\end{pmatrix}.\quad
\]

\bigskip
Taking into account the periodic boundary conditions $g_i^*\,\omega = \omega\;\;\text{for}\;\; i = 1, 2, 3$, this can be viewed as a system on the product $\R \times T^3$, with the coordinates $t$ on the real line and $(w_1,z_1,z_2)$ on the torus $T^3$. The remaining periodic boundary condition $g_0^*\,\omega = e^{-\mu}\cdot\omega$ can be expressed in the language of $(0,1)$--forms as 
\[
g_0^*\,(a(w,z)\,d\bar w + b(w,z)\,d\bar z) = e^{-\mu}\cdot(a(w,z)\,d\bar w + 
b(w,z)\,d\bar z).
\]
After switching to $c = w_2\cdot a$, this turns into 
\begin{equation}\label{E:boundary}
\bar\beta\cdot b(\alpha w,\beta z) = e^{-\mu}\cdot b(w,z)\quad\text{and}\quad 
c(\alpha w,\beta z) = e^{-\mu}\cdot c(w,z).
\end{equation}

\noindent
It is the periodic boundary value problem \eqref{E:one}, \eqref{E:boundary} on the manifold $\mathbb R \times T^3$ that we now wish to solve.

%%%%%%%%%%%%%%%%%%%%%%%%%%%%%%%%%%%%%%%%%%

\section{Fourier analysis}
We will use  Fourier analysis on the 3-torus to solve the system \eqref{E:one}. First, consider the following basis in $\mathbb R^3$\,:
\begin{gather*}
\xi   = (a_1,\; \Re b_1,\; \Im b_1) \\
\eta  = (a_2,\; \Re b_2,\; \Im b_2) \\ 
\zeta = (a_3,\; \Re b_3,\; \Im b_3)
\end{gather*}
where $a = (a_1, a_2, a_3)$ and $b = (b_1, b_2, b_3)$ are, as before, the eigenvectors of $M$ corresponding to the eigenvalues $\alpha$ and $\beta$. The quotient of $\mathbb R^3$ by the integer lattice spanned by the vectors $\xi$, $\eta$, $\zeta$ is our 3-torus. The matrix whose rows are the vectors $\xi$, $\eta$, $\zeta$ will be called $Y$ so that
\[
Y =
\begin{pmatrix}
\xi_1 &\; \xi_2 &\; \xi_3 \\
\eta_1 &\; \eta_2 &\; \eta_3 \\
\zeta_1 &\; \zeta_2 &\; \zeta_3
\end{pmatrix}
.
\quad\empty
\]

\smallskip\noindent
Without loss of generality, we will assume that $\det Y = 1$. The columns of the matrix 
\[
Y^{-1} =
\begin{pmatrix}
\xi_1^* &\; \eta_1^* &\; \zeta_1^* \\
\xi_2^* &\; \eta_2^* &\; \zeta_2^* \\
\xi_3^* &\; \eta_3^* &\; \zeta_3^*
\end{pmatrix}
\qquad\empty
\]

\smallskip\noindent
form the dual basis $\xi^*$, $\eta^*$, $\zeta^*$ with respect to the usual dot product $(\,\cdot\,,\cdot)$ on $\mathbb R^3$. One can easily check that the functions $T^3 \to \C$ defined by
\begin{equation}\label{E:basis}
\theta\; \to\; \exp\,(2\pi i\,(\theta,k\xi^* + \ell\eta^* + m\zeta^*))\quad\text{for all}
\quad (k,\ell,m) \in \mathbb Z^3,
\end{equation}
where $\theta = (\theta_1,\theta_2,\theta_3) = (w_1,z_1,z_2)$, form an 
orthonormal basis in the $L^2$-space of complex-valued functions on the 
3-torus. 

For each $t\in \mathbb R$, expand the functions $b(t,\theta)$ and  $c(t,\theta): T^3 \to \C$ into Fourier series,
\[
b (t,\theta) = \sum_{k,\ell,m}\;b_{k \ell m} (t)\,\exp\,(2\pi i\,(\theta,k\xi^* 
+ \ell\eta^* + m\zeta^*))
\]
and 
\[
c (t,\theta) = \sum_{k,\ell,m}\;c_{k \ell m} (t)\,\exp\,(2\pi i\,(\theta,k\xi^* 
+ \ell\eta^* + m\zeta^*)),
\]
and plug them into equation \eqref{E:one}. For each individual triple of integers $(k,\ell,m)$, we obtain the system
\begin{equation}\label{E:two}
\begin{pmatrix} b'_{k \ell m} \\ c'_{k \ell m}\end{pmatrix} = 
\begin{pmatrix} -e^t P_{k \ell m} &  Q_{k \ell m} \\ e^{-t} \bar Q_{k \ell m} & 
e^t P_{k \ell m}\end{pmatrix}
\begin{pmatrix} b_{k \ell m} \\ c_{k \ell m} \end{pmatrix},
\end{equation}

\medskip\noindent
where the prime stands for the $t$-derivative,
\[
P_{k \ell m} = 2\pi (k\xi_1^* + \ell\eta_1^* + m\zeta_1^*) \in \mathbb R,\quad\text{and}
\]
\[
Q_{k \ell m} = 2\pi (k\xi_2^* + \ell\eta_2^* + m\zeta_2^*) + 2\pi i\, 
(k\xi_3^* + \ell\eta_3^* + m\zeta_3^*) \in \mathbb C.
\]
This is a linear system of ordinary differential equations with non-constant coefficients. Note that $P_{k \ell m}$ and $Q_{k \ell m}$ are actually constants so the only dependence of the coefficients on $t$ comes from the factors of $e^t$ and $e^{-t}$. For future use, we make the following observation.

\begin{lemma}\label{L:galois}
For no choice of $(k,\ell,m) \neq (0,0,0)$ can $Q_{k\ell m}$ be equal to zero.
\end{lemma}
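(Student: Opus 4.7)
The plan is to translate the equation $Q_{k\ell m}=0$ into a linear-algebra condition on $(k,\ell,m)$ using the relation $Y\cdot Y^{-1}=I$, and then to invoke the irrationality of $\alpha$ (this is presumably the source of the label ``galois'').

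First I would separate $Q_{k\ell m}=0$ into its real and imaginary parts. By inspection of the formula for $Q_{k\ell m}$, the vanishing of its real and imaginary parts says exactly that the second and third components of $Y^{-1}(k,\ell,m)^{T}$ are both zero. Equivalently, $Y^{-1}(k,\ell,m)^{T}=(s,0,0)^{T}$ for some $s\in\R$, so that
\[
(k,\ell,m)^{T}\;=\;s\cdot(\text{first column of }Y)\;=\;s\cdot(a_1,a_2,a_3)^{T},
\]
since the rows of $Y$ are $\xi=(a_1,\Re b_1,\Im b_1)$, $\eta=(a_2,\Re b_2,\Im b_2)$, $\zeta=(a_3,\Re b_3,\Im b_3)$. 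Hence $Q_{k\ell m}=0$ forces $(k,\ell,m)$ to be a real scalar multiple of the real eigenvector $a$ of $M$.

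Next I would deduce a contradiction from the assumption that some nonzero $(k,\ell,m)\in\Z^3$ is proportional to $a$. In that case $(k,\ell,m)^{T}$ itself is a real eigenvector of $M$ with eigenvalue $\alpha$, so $M(k,\ell,m)^{T}=\alpha(k,\ell,m)^{T}$. The left-hand side lies in $\Z^3$ because $M\in SL(3,\Z)$, and the right-hand side is $\alpha$ times a nonzero integer vector, which forces $\alpha\in\Q$. This contradicts the fact, already noted in Section \ref{S:def}, that $\alpha$ is irrational (being a real root of an integer monic cubic with non-real conjugate roots $\beta,\bar\beta$, and satisfying $\alpha>1$, so no rational-root candidate $\pm 1$ applies). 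This rules out the existence of a nonzero integer triple with $Q_{k\ell m}=0$, completing the proof.

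There is no real obstacle beyond organizing the linear algebra cleanly; the argument is essentially a one-line consequence of the identification of the kernel of $(\text{rows 2,3 of }Y^{-1})$ with the real eigenspace of $M$ spanned by $a$, combined with the arithmetic fact that this eigenspace contains no nonzero integer vector.
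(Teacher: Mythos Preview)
Your proof is correct and follows essentially the same approach as the paper's own argument: both recognize that $Q_{k\ell m}=0$ forces $(k,\ell,m)$ to be proportional to the first column of $Y$, i.e.\ to the real eigenvector $a$, and then derive a contradiction with the irrationality of $\alpha$. The only cosmetic difference is that the paper phrases the linear algebra via the identity $Y\,(P_{k\ell m},\Re Q_{k\ell m},\Im Q_{k\ell m})^{T}=2\pi\,(k,\ell,m)^{T}$, whereas you apply $Y^{-1}$ to $(k,\ell,m)^{T}$; these are of course equivalent.
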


\begin{proof}
Observe that
\[
Y\,
\begin{pmatrix} P_{k \ell m} \\ \Re\, Q_{k \ell m} \\ \Im\, Q_{k \ell m} 
\end{pmatrix}\; =\; 2\pi\,\begin{pmatrix} k \\ \ell \\ m \end{pmatrix}.
\]
If $Q_{k\ell m} = 0$, the first column of $Y$, which is an eigenvector of $M$ with the eigenvalue $\alpha$, is proportional to the vector with integral coordinates $k$, $\ell$, and $m$. The latter vector is then also an eigenvector of $M \in SL(3,\Z)$ with the eigenvalue $\alpha$, which contradicts the fact that $\alpha$ is irrational.
\end{proof}

Next, we need to take care of the boundary conditions \eqref{E:boundary}. In 
our $\theta$--notations, we have $\beta z = (\beta_1 + i\beta_2)(z_1 + 
i z_2) = (\beta_1 + i\beta_2)(\theta_2 + i \theta_3) = (\beta_1\theta_2 
- \beta_2\theta_3) + i (\beta_2\theta_2 + \beta_1\theta_3)$ and $\alpha w = 
\alpha (w_1 + iw_2) = \alpha\theta_1 + i e^{t + \ln \alpha}$. To simplify
notations, introduce the matrix 
\[
A = \begin{pmatrix} \alpha & 0 & 0 \\
0 & \beta_1 & -\beta_2 \\
0 & \beta_2 &  \beta_1 
\end{pmatrix}
\]
then the boundary conditions \eqref{E:boundary} become
\[
\bar\beta\cdot b(t + \ln \alpha,A(\theta)) = e^{-\mu}\cdot b(t,\theta),
\quad
c(t + \ln \alpha,A(\theta)) = e^{-\mu}\cdot c(t,\theta).
\]
In order to re-write these in terms of the Fourier coefficients $b_{k \ell m}$ 
and $c_{k \ell m}$, we need the following technical result. 

\begin{lemma}
For any integers $k$, $\ell$ and $m$, we have $(A(\theta),k\xi^* + \ell\eta^*
+ m\zeta^*) = (\theta,k'\xi^* + \ell'\eta^* + m'\zeta^*)$, where
\begin{equation}\label{E:M}
\begin{pmatrix} k' \\ \ell' \\ m' \end{pmatrix}\; =\; M\,
\begin{pmatrix} k \\ \ell \\ m \end{pmatrix}.
\end{equation}
\end{lemma}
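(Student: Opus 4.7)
The plan is to reduce the claimed identity to a single matrix identity relating $A$, $M$, and $Y$, and then verify that identity directly from the eigenvector relations that defined $Y$.

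First I would use the obvious adjunction $(A\theta, v) = (\theta, A^T v)$ for the standard dot product on $\R^3$, which reduces the lemma to proving
\[
A^T (k\xi^* + \ell\eta^* + m\zeta^*) \;=\; k'\xi^* + \ell'\eta^* + m'\zeta^*.
\]
Since $\xi^*, \eta^*, \zeta^*$ are by definition the columns of $Y^{-1}$, the left-hand side equals $A^T Y^{-1}(k,\ell,m)^T$ and the right-hand side equals $Y^{-1} M (k,\ell,m)^T$. So the lemma is equivalent to the single matrix identity $A^T Y^{-1} = Y^{-1} M$, or equivalently
\[
Y A^T \;=\; M Y.
\]

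The remaining step is to check this identity column by column, using that the columns of $Y$ are precisely the real vectors $a$, $\Re b$, $\Im b$. The first column gives $Y \cdot (\alpha, 0, 0)^T = \alpha a = Ma$, which is just the defining relation $Ma = \alpha a$. Splitting $Mb = \beta b$ into real and imaginary parts gives
\[
M(\Re b) = \beta_1 \Re b - \beta_2 \Im b, \qquad M(\Im b) = \beta_2 \Re b + \beta_1 \Im b,
\]
which are exactly $Y \cdot (0, \beta_1, -\beta_2)^T$ and $Y \cdot (0, \beta_2, \beta_1)^T$, i.e.\ the second and third columns of $YA^T$. Thus all three columns match, establishing $YA^T = MY$ and hence the lemma.

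There is no real obstacle here; the only subtle point is keeping the signs straight when writing $M\Re b$ and $M \Im b$ from $Mb = \beta b$, which explains why the matrix $A^T$ (rather than $A$) is what enters the change-of-basis, and why the identity comes out in the form $YA^T = MY$ instead of a naive $YA = MY$.
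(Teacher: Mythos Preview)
Your proof is correct and follows essentially the same approach as the paper: both reduce the lemma to the matrix identity $MY = YA^{T}$ (equivalently $A^{T}Y^{-1} = Y^{-1}M$) and then use it to rewrite the inner product. The paper merely asserts this identity as a ``straightforward calculation,'' whereas you spell out the column-by-column verification from the eigenvector relations, which is a welcome addition but not a different method.
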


\begin{proof}
A straightforward calculation with matrices shows that $M Y = Y A^t$. Viewing $\theta$ as a column, we obtain
\begin{multline}\notag
(A(\theta),k\xi^* + \ell\eta^* + m\zeta^*) \\ = 
\theta^t A^t\, Y^{-1} \begin{pmatrix} k \\ \ell \\ m \end{pmatrix} = 
\theta^t\, Y^{-1} M \begin{pmatrix} k \\ \ell \\ m \end{pmatrix}  =
\theta^t\, Y^{-1} \begin{pmatrix} k' \\ \ell' \\ m' \end{pmatrix} \\ =
(\theta,k'\xi^* + \ell'\eta^* + m'\zeta^*).
\end{multline}
\end{proof}

Now, substitute the Fourier expansions of $b(t,\theta)$ and $c(t,\theta)$ into the boundary conditions to obtain
\begin{alignat*}{1}
\bar\beta \cdot b(t + \ln \alpha,&\, A(\theta)) \\ 
& =\bar\beta\; \sum_{k,\ell, m} b_{\,k \ell m} (t + \ln \alpha)\,\exp(2\pi i (A(\theta),
k\xi^* + \ell\eta^* + m\zeta^*)) \\ 
& =\bar\beta\; \sum_{k,\ell, m} b_{\,k \ell m} (t + \ln \alpha)\,\exp(2\pi i (\theta,
k'\xi^* + \ell'\eta^* + m'\zeta^*)) \\ 
& = e^{-\mu}\,\sum_{k',\ell', m'} b_{k' \ell' m'} (t)\,
\exp(2\pi i (\theta,k'\xi^* + \ell'\eta^* + m'\zeta^*)),
\end{alignat*}
and similarly for $c$. A term-by-term comparison of the coefficients allows us to conclude that
\begin{equation}\label{E:bdry2}
\bar\beta \cdot b_{k\ell m}(t + \ln \alpha) = e^{-\mu}\cdot b_{k'\ell'm'}(t),
\quad
c_{k\ell m}(t + \ln \alpha) = e^{-\mu}\cdot c_{k'\ell'm'}(t),
\end{equation}
where the triples $(k,\ell,m)$ and $(k',\ell',m')$ are related by the equation \eqref{E:M}. Therefore, to fit  $b_{k\ell m}(t)$ and $c_{k\ell m}(t)$ together into a Fourier series solution, we need to know how $M$ acts on the triples $(k,\ell,m)$.

%%%%%%%%%%%%%%%%%%%%%%%%%%%%%%%%%%%%%%%%%%%%%

\section{Finite orbits}\label{S:finite}
The infinite cyclic subgroup of $SL (3, \mathbb Z)$ generated by the matrix $M$ acts on the lattice $\mathbb Z^3$. The only finite orbit of this action consists of the triple $(k,\ell ,m) = (0,0,0)$. The solutions of the equation \eqref{E:two} corresponding to this triple must be constant; we will denote them by $b$ and $c$. The boundary conditions \eqref{E:bdry2} then translate into $\bar\beta\, b = e^{-\mu}\, b$ and $c = e^{-\mu} c$, resulting in exactly two choices for the spectral point $z = e^{\mu}$ of the operator \eqref{E:dd}, namely, $z = 1$ and $z = 1/\bar\beta = \alpha\beta$. These correspond to the spectral points $z = \alpha^{1/4}$ and $z = \alpha^{1/4}\beta$ of the operator $\D^+(X)$ as claimed in Theorem \ref{T:unit}.

%%%%%%%%%%%%%%%%%%%%%%%%%%%%%%%%%%%%%%%%%%%%%

\section{Infinite orbits}
For any fixed triple of integers $(k_0,\ell_0,m_0) \neq (0,0,0)$, the triples $(k_n,\ell_n,m_n)$, $n \in \Z$, in its orbit can be found from the equation

\[
\begin{pmatrix} k_n \\ \ell_n \\ m_n \end{pmatrix}\; =\; M^n\, 
\begin{pmatrix} k_0 \\ \ell_0 \\ m_0 \end{pmatrix}.
\]

\medskip\noindent
Denote $b_n (t) = b_{k_n \ell_n m_n}(t)$ and $c_n (t) = c_{k_n \ell_n m_n}(t)$. It follows from equations \eqref{E:bdry2} that, once we know $b_0 (t)$ and $c_0 (t)$, the rest of $b_n (t)$ and $c_n (t)$ can be determined uniquely from the recursive relation
\[
b_{n+1} (t) = \bar\beta\cdot e^{\mu} \cdot b_n (t + \ln \alpha),\quad
c_{n+1} (t) = e^{\mu} \cdot c_n (t + \ln \alpha).
\]
Therefore, each infinite orbit gives rise to the infinite series
\medskip
\begin{alignat*}{1}
b(t,\theta) & = \sum_{n \in \mathbb Z}\quad\bar\beta^n\cdot e^{n\mu} \cdot b_0 (t 
+ n\ln\alpha)\cdot\exp(2\pi i(\theta,k_n\xi^* + \ell_n\eta^* + m_n\zeta^*)),\\
c(t,\theta) & = \sum_{n \in \mathbb Z}\quad e^{n\mu} \cdot c_0(t + n\ln\alpha)
\cdot\exp(2\pi i(\theta,k_n\xi^* + \ell_n\eta^* + m_n\zeta^*)).
\end{alignat*}

\smallskip\noindent
The question becomes whether these formal series solutions converge to a solution of \eqref{E:one}. We will show that, for certain values of $\mu$, the series cannot converge in $L^2$ norm unless $b_0 (t) = c_0 (t) = 0$; this  will imply that the corresponding $z = e^{\mu}$ are not in the spectral set of the operator $\bar\p\,\oplus\,\bar\p^*$. To this end, denote by $\delta$ the real number 
\[
\delta\; =\; \Re \mu/\ln \alpha - 1/4
\]
and introduce the notations
\[
u(t) = b_0 (t)\quad\text{and}\quad v(t) = e^{\,t/2}\,c_0(t).
\]

\begin{lemma}\label{L:xy}
The above Fourier series for $b(t,\theta)$ and $c(t,\theta)$ converge to $L^2_1$ sections on $X$ if and only if both $u(t)$ and $v(t)$ belong to $L^2_{1,\,\delta - 1/4}\,(\mathbb R)$.
\end{lemma}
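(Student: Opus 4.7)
The plan is to compute $\|\omega\|^2_{L^2_1(X)}$ via the Fourier decomposition and match it against the weighted Sobolev norms of $u$ and $v$ on $\R$. Three structural inputs guide the calculation: $X$ is realized as the mapping torus $(\R\times T^3)/\langle g_0\rangle$ with fundamental domain $[0,\ln\alpha)\times T^3$; the basis \eqref{E:basis} is orthonormal on $T^3$; and along an $M$-orbit the coefficients satisfy the recursion $b_n(t) = (\bar\beta e^\mu)^n u(t + n\ln\alpha)$, $c_n(t) = e^{n\mu}\,c_0(t + n\ln\alpha)$ inherited from \eqref{E:bdry2}.

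For the $L^2$ part I would first work out the geometry: in the coordinates $(t, w_1, z_1, z_2)$ with $t = \ln w_2$, the Tricerri metric reads $g = dt^2 + e^{-2t}\,dw_1^2 + e^t(dz_1^2 + dz_2^2)$, so $dV_g = dt\,dw_1\,dz_1\,dz_2$ and $|\omega|^2 = |c|^2 + e^{-t}|b|^2$ after the substitution $c = e^t a$. Applying Parseval on $T^3$ reduces $\|\omega\|^2_{L^2(X)}$ to a sum over $M$-orbits. On a single orbit, the recursion together with the identity $|\bar\beta|^2 = 1/\alpha$ (from $\det M = 1$) and the change of variable $s = t + n\ln\alpha$ telescopes the double sum $\sum_n\int_0^{\ln\alpha}$ into $\int_\R$ against a step-function weight $e^{2\lfloor s/\ln\alpha\rfloor\Re\mu}$ that is comparable to $e^{2s(\delta + 1/4)}$. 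Absorbing the $e^{-s}$ factor from the $b$-term into the substitution $v = e^{s/2}c_0$ produces integrals of $|u|^2$ and $|v|^2$ against $e^{s(2\delta - 1/2)}$, which is precisely the $L^2_{\delta - 1/4}(\R)$ weight.

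To obtain the $L^2_1$ statement I would apply the same procedure to $\nabla\omega$. The $t$-derivative commutes with the orbit recursion, $\partial_t b_n = (\bar\beta e^\mu)^n u'(t+n\ln\alpha)$ and similarly for $c_n$, so the analogous computation yields weighted integrals of $|u'|^2$ and $|v'|^2$ against the same weight $e^{s(2\delta - 1/2)}$, upgrading the condition to $L^2_{1,\delta-1/4}$. The horizontal $T^3$-derivatives carry the Fourier-frequency factors $|P_n|^2 = \alpha^{2n}|P_0|^2$ and $|Q_n|^2 = \alpha^{-n}|Q_0|^2$ arising from the intertwining $MY = YA^t$; these are controlled by using the first-order system \eqref{E:two} to trade each spatial frequency factor against a $t$-derivative plus the opposite spatial frequency, and ellipticity of $\bar\partial\oplus\bar\partial^*$ on the compact manifold $X$ ensures that the resulting graph norm is equivalent to the standard $L^2_1(X)$ norm.

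The main obstacle I anticipate is this last step: tracking how the metric weights $e^{2t}$ on $\partial_{w_1}$ and $e^{-t}$ on $\partial_{z_j}$ combine with the frequency factors so that, after the change of variable $s = t + n\ln\alpha$, the $\alpha^{\pm n}$-growth cancels against $e^{\pm n\ln\alpha}$ and only an $s$-dependent weight remains, which the ODE exchange then reconciles with the $\partial_t$-contribution already controlled by $u, v \in L^2_{1,\delta - 1/4}$. The Christoffel corrections for the covariant derivative on $(0,1)$-forms should contribute only lower-order terms that are absorbed into the $L^2$ part of the norm.
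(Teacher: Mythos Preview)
Your $L^2$ computation is correct and is essentially what the paper does, though the paper packages it more abstractly: it recognizes $z^{t/\ln\alpha}\,b(t,\theta)$ (with $z=\bar\beta e^{\mu}$) as the Fourier--Laplace transform, in the sense of \cite{MRS1}, of the single-mode function on $\R\times T^3$, verifies $|z^{1/\ln\alpha}|=e^{\delta-1/4}$, and then invokes \cite[Proposition~4.2]{MRS1} wholesale for the norm identification. Your Parseval-plus-change-of-variables argument, together with the metric computation and the identity $|\bar\beta|^2=\alpha^{-1}$, is exactly the content underlying that proposition, so the two approaches coincide, yours being more self-contained.

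For the $L^2_1$ upgrade the paper says nothing beyond the citation, whereas you attempt to work it out. Your $\partial_t$ step is fine, and your observation that the metric weights $e^{2t},e^{-t}$ cancel the $\alpha^{\pm n}$ growth of $|P_n|^2,|Q_n|^2$ is correct. But note that what survives after the change of variable are the unbounded $s$-weights $e^{2s}|P_0|^2$ and $e^{-s}|Q_0|^2$, which are \emph{not} dominated by the $L^2_{1,\delta-1/4}$ norm of $u,v$; trading via \eqref{E:two} as you propose is slightly circular, since eliminating $Pe^s u$ reintroduces $Qe^{-s/2}v$ and vice versa. The clean fix is the one you almost state: in the only case of interest the series is a formal solution, so the limiting $\omega$ lies in the kernel of the twisted elliptic operator on the \emph{compact} manifold $X$, and elliptic regularity gives $\|\omega\|_{L^2_1(X)}\le C\|\omega\|_{L^2(X)}$ for free. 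Thus the $L^2$ calculation already yields the forward implication, and the reverse ($\omega\in L^2_1\Rightarrow u,v\in L^2_{1,\delta-1/4}$) follows from your $\partial_t$ computation alone, since $\|\nabla_t\omega\|_{L^2(X)}\le\|\omega\|_{L^2_1(X)}$.
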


\begin{proof}
Let $z = \bar\beta \cdot e^{\mu}$ then $z^{\,t/\!\ln\alpha}\cdot b(t,\theta)$ is the Fourier--Laplace transform \cite{MRS1} of the function $u (t) \exp(2\pi i(\theta,k_0\xi^*+\ell_0\eta^*+m_0\zeta^*))$ on $\mathbb R \times T^3$ with respect to covering translation $(t,\theta) \to (t\, +\, \ln \alpha, A(\theta))$. One can easily check that
\[
\big|z^{1/\!\ln\alpha}\big| %= (\ln |\beta| + \Re \mu)/\!\ln\alpha = -1/2 + (\delta + 1/4) 
= e^{\,\delta - 1/4}.
\] 
From this point on, we follow the proof of \cite[Proposition 4.2]{MRS1} and use the fact that the functions $\exp(2\pi i(\theta, k_n \xi^* + \ell_n\eta^* + m_n\zeta^*))$ form an orthonormal basis on the fibers $\{ t\} \times T^3$. For example, it follows by direct calculation that
\begin{multline*}
\| z^{\,t/\!\ln \alpha}\cdot b(t,\theta) \|_{L^2(X)}^2\, =\, 
\sum_{n\in \mathbb Z}\;\; \int_0^{\ln \alpha} |z|^{\,2(n+ t/\!\ln \alpha)}\cdot | u(t + n \ln\alpha)|^2\,dt \\ =
\int_{-\infty}^{\infty} |z|^{\,2t/\!\ln\alpha}\cdot |u(t)|^2\,dt \, =\, \| u \|^2_{L^2_{\delta - 1/4} (\mathbb R)}\,.
\end{multline*}
The proof for the function $c(t,\theta)$ is similar.
\end{proof}

%The functions $b(t,\theta)$ and $c(t,\theta)$ are obtained by the Fourier--Laplace transform respectively from the functions $b_0(t) \exp(2\pi i(\theta,k_0\xi^*+\ell_0\eta^*+m_0\zeta^*))$ and $c_0(t) \allowbreak \exp(2\pi i(\theta, k_0\xi^* \ell_0\eta^*+ m_0\zeta^*))$ on $\mathbb R \times T^3$ with respect to the covering translation $(t,\theta) \to (t\, +\, \ln \alpha, A(\theta))$; see \cite[Section 4]{MRS1}. It follows from \cite[Proposition 4.2]{MRS1} that $b_0(t) \exp(2\pi i(\theta,k_0\xi^*+\ell_0\eta^*+m_0\zeta^*))$ and $c_0(t) \exp(2\pi i(\theta, k_0\xi^* + \ell_0\eta^*+ m_0\zeta^*))$ belong to weighted Sobolev spaces on $\mathbb R \times T^3$ with the respective weights $\Re\,((\mu + \ln\bar\beta)/\ln \alpha) = \delta - 1/4$ and $\Re\,(\mu/\ln \alpha) = \delta + 1/4$. The statement of the lemma now follows by observing that the function $\exp(2\pi i(\theta, k_0\xi^* + \ell_0 \eta^* + m_0 \zeta^*))$ has norm one on the torus fibers.

%A more direct proof can be obtained by computing the norms of $b(t,\theta)$ and $c(t,\theta)$ explicitly using the fact that the functions $\exp(2\pi i(\theta,k_n\xi^* + \ell_n\eta^* + m_n\zeta^*))$ form an orthonormal basis on each torus fiber, and then comparing these norms to the weighted norms of $u(t)$ and $v(t)$.
%\end{proof}

One can easily check using \eqref{E:two} that the functions $u(t)$ and $v(t)$ solve the following system of ordinary differential equations 

\begin{equation}\label{E:three}
\begin{pmatrix} u' \\ v'\end{pmatrix} =
\begin{pmatrix} -P e^t   &  Q\,e^{-t/2} \\ 
\bar Q\,e^{-t/2}   &   1/2 + P e^t \end{pmatrix}\,
\begin{pmatrix} u \\ v\end{pmatrix},
\end{equation}

\medskip\noindent
where $P = P_{k_0 \ell_0 m_0} \in \R$ and $Q = Q_{k_0 \ell_0 m_0} \in \C$. Because of Lemma \ref{L:xy}, we are only interested in solutions $u(t)$ and $v(t)$ which belong to $L^2_{1,\,\delta - 1/4}\,(\mathbb R)$.

\begin{proposition}\label{P:spec}
Suppose that $-1/4 \le \delta \le 1/4$ then all solutions $u(t)$, $v(t)$ of the system \eqref{E:three} which belong to $L^2_{1,\,\delta - 1/4}\,(\mathbb R)$ are identically zero. 
\end{proposition}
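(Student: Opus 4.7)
The plan is to decouple the system \eqref{E:three} into a scalar second-order ODE for $u$, use asymptotic analysis at $t=\pm\infty$ to extract exponential decay from the weighted $L^2$ hypothesis, and then close the argument via one of two energy identities selected by the sign of $P = P_{k_0\ell_0 m_0}$. First, since $Q = Q_{k_0\ell_0 m_0}\neq 0$ by Lemma~\ref{L:galois}, I would solve the first equation of \eqref{E:three} for $v = Q^{-1}e^{t/2}(u' + Pe^t u)$ and substitute into the second to obtain $u'' = V(t)u$, where
\[
V(t) = |Q|^2 e^{-t} + P^2 e^{2t} - Pe^t.
\]
A Galois-theoretic argument parallel to Lemma~\ref{L:galois} also yields $P\neq 0$: if $P$ vanished, then $(k_0,\ell_0,m_0)^T$ would lie in the $M$-invariant real $2$-plane $\Span_{\mathbb R}(\Re b, \Im b)$, on which $M$ acts with determinant $|\beta|^2 = 1/\alpha$ (irrational, since $\det M = 1$) and eigenvalues $\beta, \bar\beta$ (non-real), excluding any nonzero integer vector there. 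Consequently $V(t)\to+\infty$ at both ends, with $V(t)\sim|Q|^2 e^{-t}$ near $-\infty$ and $V(t)\sim P^2 e^{2t}$ near $+\infty$.

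Next, since $V$ blows up at both ends, the equation $u''=Vu$ has exponential dichotomy there. Under the substitutions $s=2|Q|e^{-t/2}$ near $-\infty$ and $s=|P|e^t$ near $+\infty$, it reduces to a modified Bessel equation of order $0$, whose two fundamental solutions are doubly-exponentially decaying (like $\exp(-2|Q|e^{-t/2})$ and $\exp(-|P|e^t)$ respectively) and doubly-exponentially growing. No weight of the form $e^{(\delta-1/4)t}$ with $\delta\in[-1/4,1/4]$ can absorb the growing mode, so the hypothesis $u\in L^2_{1,\delta-1/4}(\mathbb R)$ forces $u$ to coincide asymptotically with the decaying solution at each end. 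In particular $u,u'$, and hence also $v,v'$ through the formula for $v$, decay faster than any exponential as $t\to\pm\infty$, so all boundary terms in the integration by parts below vanish.

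The proof then splits by the sign of $P$. If $P\le 0$, both $Pe^t$ and $Pe^t-1$ are non-positive, so $V(t)=|Q|^2 e^{-t}+Pe^t(Pe^t-1)\ge 0$ for all $t$. Multiplying $u''=Vu$ by $\bar u$, integrating over $\mathbb R$, and integrating by parts yields
\[
\int_{\mathbb R}|u'|^2\,dt \;+\; \int_{\mathbb R}V\,|u|^2\,dt \;=\; 0,
\]
forcing $u\equiv 0$ and hence $v\equiv 0$. If $P>0$, a direct computation from \eqref{E:three} gives
\[
(|u|^2-|v|^2)'(t) \;=\; -2Pe^t\,|u(t)|^2 \;-\; (1+2Pe^t)\,|v(t)|^2 \;\le\; 0,
\]
with strict inequality unless $u(t)=v(t)=0$. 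Thus $\Phi(t)=|u(t)|^2-|v(t)|^2$ is non-increasing, and since $\Phi(\pm\infty)=0$ by the super-exponential decay, $\Phi\equiv 0$ and therefore $\Phi'\equiv 0$, yielding $u\equiv v\equiv 0$.

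The most delicate step, which I expect to be the main obstacle, is the asymptotic analysis: converting the weighted integrability $u\in L^2_{1,\delta-1/4}(\mathbb R)$ into the quantitative pointwise super-exponential decay of $u, u'$ (and hence of $v, v'$) needed to nullify the boundary terms. The critical boundary cases $\delta=\pm 1/4$, where the weight becomes constant at one endpoint, require the sharpest form of WKB or Bessel-function expansions with careful bookkeeping of the polynomial prefactors so as to exclude any slow-decaying contribution.
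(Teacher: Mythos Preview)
Your argument is correct but follows a genuinely different route from the paper's.  The paper also decouples to the second-order equation $u''=V(t)u$ (and the analogous one for $v$), but rather than extracting pointwise super-exponential decay from asymptotic/WKB analysis, it works directly with the weighted $L^2$ hypothesis: integrating by parts on a finite interval $[a,b]$ and showing, via elementary monotonicity arguments on $u^2$, that one can always find $a\to-\infty$ and $b\to+\infty$ with $u(a)u'(a)\ge 0$ and $u(b)u'(b)\le 0$.  This avoids any appeal to Levinson/Liouville--Green theory and is entirely self-contained, whereas your approach front-loads the analytic difficulty into the asymptotic step (which you correctly flag as the most delicate; note that the reduction to a Bessel equation of order~$0$ is only exact for the leading term of $V$, so a perturbation result is genuinely needed).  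On the other hand, your treatment of the case $P>0$ via the monotone quantity $\Phi=|u|^2-|v|^2$ is more direct than the paper's, which instead runs the same positivity-of-potential argument on the decoupled equation for $v$.  Finally, your observation that $P\neq 0$ (since a nonzero integer vector in the $M$-invariant plane $\Span_{\mathbb R}(\Re b,\Im b)$ would be driven to $0$ by iterating $M$, as $|\beta|<1$) is correct and slightly sharper than the paper, which treats $P=0$ as a separate case via explicit modified Bessel solutions.
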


\begin{proof}
De-coupling equations \eqref{E:three} turns them into the following Sturm--Liou\-ville problems 
\begin{gather}
-u'' + (Pe^t (Pe^t - 1) + |Q|^2\,e^{-t})\,u = 0\quad\text{and}\label{E:four} \\
-v'' + (Pe^t (Pe^t + 2) + |Q|^2\,e^{-t} + 1/4)\,v = 0.\label{E:five}
\end{gather}
Without loss of generality, we will assume that $u$ and $v$ are real valued functions. We will separate our argument into three cases, depending on whether $P$ is positive, negative, or zero.

If $P < 0$, introduce the positive real numbers $p = -P$ and $q = |Q|$ and re-write the equation \eqref{E:four} in the form $-u'' + U(t)\,u = 0$ with the everywhere positive potential $U(t) = pe^t (pe^t + 1) + q^2\,e^{-t}$. For any choice of $a < b$, we then have
\[
- \int_a^b u''(t)u(t)\,dt + \int_a^b U(t)\,u^2(t)\,dt = 0
\]

\medskip\noindent
and, after integration by parts,
\medskip
\begin{equation}\label{E:ab}
\int_a^b u'(t)^2\,dt + u(a)u'(a) - u(b)u'(b)  + \int_a^b U(t)\,u(t)^2\,dt = 0.
\end{equation}

\smallskip\noindent
The first and the last terms in this formula are non-negative for any choice of $a < b$. We will show that there exist $a$ arbitrarily close to $-\infty$ and $b$ arbitrarily close to $+\infty$ such that the other two terms in \eqref{E:ab} are non-negative as well. This will imply that $u(t) = 0$. Plugging $u(t) = 0$ back into \eqref{E:three} will then imply that $v(t) = 0$ because $Q \ne 0$ by Lemma \ref{L:galois}. 

We first show that for any $a_0$ there exists $a \le a_0$ such that $u(a)u'(a) \ge 0$. If $u(a_0) = 0$ we are finished. Otherwise, suppose that $u(t)u'(t) < 0$ for all $t \le a_0$. Then $(u^2(t))' = 2u(t)u'(t) < 0$ so that $u^2(t)$ is a decreasing function and hence $u^2(t) \ge u^2(a_0) > 0$ for all $t \le a_0$. This contradicts the fact that $u \in L^2_{\delta-1/4}(\mathbb R)$ with $\delta \le 1/4$.

Next, we show that for any $b_0$ there exists $b \ge b_0$ such that $u(b)u'(b) \le 0$. If $u(b_0) = 0$ we are finished. Otherwise, suppose that $u(t)u'(t) > 0$ for all $t \ge b_0$. Then $(u^2(t))' = 2u(t)u'(t) > 0$ so that $u^2(t)$ is an increasing function and hence $u^2(t) \ge u^2(b_0) > 0$ for all $t \ge b_0$. Using the formula \eqref{E:ab} with $a = b_0$ we obtain the estimate
\[
u(b) u'(b)\;\ge\;\int_{b_0} ^b U(t)\,u^2(t)\,dt\;\ge\;u^2(b_0)\int_{b_0}^b U(t)\,dt,
\]

\smallskip\noindent
and using the fact that $U(t) \ge p^2\,e^{2t}$ for all $t$, the estimate
\smallskip
\[
u(b) u'(b)\;\ge\;\frac 1 2\;p^2\,u^2(b_0)\left(e^{2b} - e^{2b_0}\right)\quad\text{for all}\quad b \ge b_0.
\]

\smallskip\noindent
Since $u(t)$ and $u'(t)$ belong to $L^2_{\delta - 1/4} (\mathbb R)$, it follows from the H{\"o}lder inequality that $u(t) u'(t) \in L^1_{2(\delta - 1/4)} (\mathbb R)$. This contradicts the above estimate for $\delta \ge -1/4$.

If $P > 0$, essentially the same argument using equation \eqref{E:five} shows that $v(t) = 0$. After plugging $v(t) = 0$ back in \eqref{E:three}, we see that $u(t) = 0$ as well. 

In the remaining case of $P = 0$, both equations \eqref{E:four} and \eqref{E:five} admit explicit solutions in terms of Bessel functions. To be precise, the general solution of \eqref{E:five} is of the form 
\begin{equation}\label{E:bessel}
C_1\cdot I_1 (2q e^{-t/2}) + C_2\cdot K_1 (2q e^{-t/2}),
\end{equation}
where $I_1(x)$ and $K_1(x)$ are the modified Bessel functions of the first and second kind, solving the equation $x^2 y'' + x y' - (x^2 + 1)\,y = 0$. One can check that the zero function is the only function among \eqref{E:bessel} that belongs to $L^2_{\delta - 1/4} (\mathbb R)$ with $-1/4 \le \delta \le 1/4$.
\end{proof}

Proposition \ref{P:spec} together with the discussion in Section \ref{S:finite} completes the proof of Theorem \ref{T:unit}.

%%%%%%%%%%%%%%%%%%%%%%%%%%%%%%%%%%%%%%%%%%%%%%%%%%%

\bigskip

\end{document}